\documentclass[11pt]{article}

\usepackage{amsmath,amssymb,amsthm}
\usepackage{graphicx}
\usepackage{array}
\usepackage{booktabs}
\usepackage[margin=1in]{geometry}
\usepackage{hyperref}

\newtheorem{theorem}{Theorem}
\newtheorem{lemma}{Lemma}
\newtheorem{definition}{Definition}
\newtheorem{fact}{Fact}

\newcommand{\N}{\mathbb{N}}
\newcommand{\R}{\mathbb{R}}
\newcommand{\G}{\mathcal{G}}

\title{Monotone Property Thresholds\thanks{This paper was originally written by the authors circa 2005 but was never submitted for publication. The present version corrects minor errors, adds references to work published since the original draft, and includes a section discussing further research directions. The core framework---the property function $\psi_G$, the density function $\varphi^\psi_\mu$, and all threshold results---is unchanged from the original.}}
\author{Colton Magnant\thanks{Department of Mathematical Sciences, Georgia Southern University, Statesboro, GA 30460, USA.}\quad and \quad Thor Whalen\thanks{Independent researcher.}}
\date{Originally written circa 2005; this version March 2026}

\begin{document}
\maketitle

\begin{center}
\small
\textbf{MSC 2020:} 05C40 (primary), 05C35, 05C42 (secondary)\\
\textbf{Keywords:} graph vulnerability, toughness, scattering number, integrity, component-order connectivity, property function, density threshold
\end{center}

\begin{abstract}
Many graph vulnerability parameters --- connectivity, toughness, scattering number, integrity, component-order connectivity --- have been studied independently, each with its own threshold results. We introduce a \emph{property function} $\psi_G$ that encodes all of these parameters as geometric properties of a single integer-valued function, and a \emph{density function} $\varphi^\psi_\mu$ that converts threshold problems into optimization over forbidden regions. This reduction yields a uniform derivation of known thresholds and produces new ones --- including the first minimum-degree thresholds for component-order connectivity --- as special cases of a single master theorem. We conclude with directions for extending the framework to spectral density parameters, edge-removal analogues, and directed graphs.
\end{abstract}

\section{Introduction}

A graph vulnerability parameter measures how connected a graph is---or more precisely, how disconnected the graph can become if we remove vertices of this graph\footnote{One must note that most said ``vulnerability'' parameters in fact measure vulnerability negatively in that a higher value of the parameter indicates a less vulnerable graph, as opposed to a more vulnerable one.}. In a communications network, one uses vulnerability measures to assess its resistance to disruption of operation in case of the failure of certain stations (vertices) or communication links (edges).

In all the following, $G$ will denote a simple graph and $n$ the order of this graph.

Connectivity is a fundamental concept in graph theory, appearing in premises and conclusions of many results. A graph $G$ is said to be \emph{connected} if any two vertices are connected by a path in $G$. From this simple definition of connectivity, many other graph properties/parameters were created to measure how close a disconnected graph is to being connected, and/or how far a connected graph is to being disconnected (i.e.\ how strongly connected a graph is)\footnote{These all measure (dis-)connectivity in different ways, but any sensible connectivity measures should agree on at least two things; no graph should be more disconnected than the empty graph (the graph with no edges) or more connected than the complete graph (the graph with all possible edges).}.

The following illustration will be useful when considering measures of graph vulnerability. Suppose we are the designers of a network and our goal is to make sure the network (i.e.\ graph) remains as connected as possible in the presence of a malicious enemy who knows the exact structure of the network and whose goal is to make the network as disconnected as possible by removing a certain number of nodes (i.e.\ vertices). The question arises as to what ``more disconnected'' means.

Since a disconnected graph comprises several connected ``pieces'' (called components---formally, a maximal connected subgraph), one usually measures how disconnected a graph is according to the number and order of these components. Consider the following two graph parameters:

\begin{definition}
Let $\omega(G)$ be the number of components and $\Omega(G)$ be the order of the largest component of~$G$.
\end{definition}

It is natural to think of $\omega(G)$ as positively measuring the ``disconnectedness'' of~$G$. That is, if $\omega(G) > \omega(H)$, $G$ could understandably be construed as being more disconnected (i.e.\ further from being connected) than $H$ in the sense that $G$ is ``broken into more pieces''.

It is also fairly natural to think of $\Omega(G)$ as negatively measuring the ``disconnectedness'' of~$G$. That is, if $\Omega(G) < \Omega(H)$, $G$ could understandably be construed as being more disconnected than~$H$ since $H$ enjoys a ``bigger connected piece'' than $G$ has.

When a graph is connected, a standard way of measuring the strength of this connectivity is to see how many vertices one would have to remove to disconnect the graph. A set of vertices whose removal disconnects a graph is called a \emph{cutset} of the graph. The connectivity $\kappa(G)$ is defined to be the minimum order of a cutset of~$G$, and $G$ is said to be \emph{$k$-connected} if $\kappa(G) \ge k$.

Consider the following simple Fact:

\begin{fact}\label{fact:basic}
\begin{align*}
G \text{ is connected} &\iff \omega(G) = 1 \\
G \text{ is disconnected} &\iff \omega(G) \in \{2, \ldots, n\} \\
G \text{ is connected} &\iff \Omega(G) = n \\
G \text{ is disconnected} &\iff \Omega(G) \in \{1, \ldots, n-1\} \\
G \text{ is connected} &\iff \kappa(G) \in \{1, \ldots, n-1\} \\
G \text{ is disconnected} &\iff \kappa(G) = 0
\end{align*}
\end{fact}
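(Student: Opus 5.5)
The plan is to prove each of the six equivalences directly from the definitions of $\omega$, $\Omega$ and $\kappa$. The three ``disconnected'' lines will follow from the ``connected'' lines by complementation, once we record the range of each parameter: $\omega(G)\in\{1,\dots,n\}$ and $\Omega(G)\in\{1,\dots,n\}$ for every graph of order $n\ge 1$, and $\kappa(G)\in\{0,\dots,n-1\}$. So the work reduces to three statements about connected graphs, plus a check of these ranges.

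For $\omega$, the argument goes through the components. They partition $V(G)$ into nonempty classes, so $1\le\omega(G)\le n$. If $G$ is connected, then $G$ itself is a maximal connected subgraph, hence the unique component, and $\omega(G)=1$. Conversely, if $\omega(G)=1$, the single component contains every vertex, so any two vertices are joined by a path. The second line is then immediate from the range.

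For $\Omega$, I use the same partition. If $G$ is connected, its only component has order $n$. If $\Omega(G)=n$, some component contains all $n$ vertices, so $G$ is connected. Since every component has between $1$ and $n$ vertices, the complementary line follows.

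For $\kappa$, I have to be explicit about the convention for graphs with no cutset, namely complete graphs. I would adopt the standard convention $\kappa(K_n)=n-1$ and assume $n\ge 2$. The main point to check is that any cutset $S$ of a connected graph satisfies $1\le|S|\le n-2$. We have $|S|\ge 1$ because removing nothing leaves $G$ connected. We have $|S|\le n-2$ because a disconnected remainder needs at least two vertices.

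\begin{itemize}
\item If $G$ is disconnected, the empty set is a cutset, so $\kappa(G)=0$.
\item If $G$ is connected and not complete, some cutset exists, so $\kappa(G)\in\{1,\dots,n-2\}$.
\item If $G=K_n$, then $\kappa(G)=n-1\ge 1$ by the convention.
\end{itemize}

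Hence connected graphs have $\kappa(G)\in\{1,\dots,n-1\}$ and disconnected ones have $\kappa(G)=0$. These two cases are exhaustive and the two value sets are disjoint, so both ``iff'' statements follow.

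The only real obstacle is this boundary convention. For $n=1$, the graph $K_1$ is connected but $\kappa(K_1)=0$, so the fifth and sixth lines fail unless one assumes $n\ge 2$ or sets $\kappa(K_1)$ by fiat. I would state the assumption $n\ge 2$ explicitly, together with the convention $\kappa(K_n)=n-1$. Everything else is a routine unwinding of definitions.
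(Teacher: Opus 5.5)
Your argument is correct and is the routine unwinding of definitions the paper intends: the paper states this as a ``simple Fact'' with no proof. Your convention $\kappa(K_n)=n-1$ is the one the paper adopts later, in the density-function section, and your $n\ge 2$ caveat is a genuine edge case the paper glosses over; note that only this assumption, not a choice of $\kappa(K_1)$ by fiat, rescues the fifth line at $n=1$, since $\{1,\ldots,n-1\}$ is then empty.
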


As we see, though $\kappa(G)$ is a way of gauging connectivity---by measuring how far $G$ is from being disconnected---it contains no information on how disconnected $G$ could become if one removes vertices. On the other hand, though $\omega(G)$ and $\Omega(G)$ gauge how disconnected a graph is, they say nothing about how strongly connected a connected graph is.

Other graph vulnerability parameters do the job of indicating not only how connected a connected graph is, but also how disconnected it could become when vertices are removed. We now define a few such parameters, using our designer/enemy illustration to assist the reader in intuiting what the parameters are measuring.

All of the following vulnerability parameters exhibit, in some way, the relation between the gain and the cost of the enemy in his effort to disconnect the graph. The cost of the enemy will always be the number of vertices $|S|$ he removes. The connectivity $\kappa(G)$ of a graph then corresponds to the minimum cost for disconnecting~$G$.

The enemy's gain will first be measured by $\omega(G - S)$, the number of components in the resulting graph.

If the enemy removes $s$ vertices and breaks the graph into $c$ components, $\frac{s}{c}$ conveys the idea of the (average) cost of each component. In this sense, the \emph{toughness} (introduced in 1973 by Chv\'{a}tal~\cite{chvatal73}) of a graph indicates the enemy's minimum average cost for disconnecting a graph:

\begin{definition}[Toughness]
The toughness $\tau(G)$ of $G$ is defined to be
\[
\min\left\{ \frac{|S|}{\omega(G - S)} \;\middle|\; S \text{ is a cutset of } G \right\}.
\]
A graph $G$ is said to be \emph{$t$-tough} if $\tau(G) \ge t$.
\end{definition}

Then the \emph{scattering number} (introduced in 1978 by Jung~\cite{jung78}) of a graph indicates the maximum ``profit'' (i.e.\ gain $-$ cost) he could hope to achieve when disconnecting this graph:

\begin{definition}[Scattering Number]
The scattering number $\mathrm{sc}(G)$ of a graph $G$ is defined to be
\[
\max\left\{ \omega(G - S) - |S| \;\middle|\; S \text{ is a cutset of } G \right\}.
\]
A graph $G$ is said to be \emph{$s$-unscattered} if $\mathrm{sc}(G) \le s$.
\end{definition}

Suppose the enemy has a sort of ``quota'' to meet before which he gets no gain from his actions. If the enemy must create at least $\ell$ components before counting any gain, what will his minimum cost be for reaching (or exceeding) this quota? The $(\ell)$-connectivity answers this question:

\begin{definition}[$(\ell)$-connectivity]
Given an integer $2 \le \ell \le \alpha(G)$, the $(\ell)$-connectivity (introduced in 1987 by Oellermann~\cite{oellermann87}) $\kappa_\ell(G)$ of $G$ is defined to be\footnote{This parameter should not be confused with the ``generalized $k$-connectivity'' of Hager (1985) and Li--Mao (2014), which measures minimum internally disjoint Steiner trees connecting $k$-vertex subsets and uses the same notation $\kappa_k(G)$.}
\[
\min\left\{ |S| \;\middle|\; \omega(G - S) \ge \ell \right\}.
\]
A graph $G$ is said to be \emph{$(k, \ell)$-connected} if $\kappa_\ell(G) \ge k$.
\end{definition}

Note that $\kappa(G) = \kappa_2(G)$.

The three above vulnerability parameters measure disconnectivity (the gain of the enemy) with $\omega(\cdot)$: The following three parameters use $\Omega(\cdot)$ instead. In contrast with $\omega(\cdot)$, $\Omega(\cdot)$ negatively measures disconnectivity in that smaller $\Omega(\cdot)$ values reflect greater disconnectivity. Note that in this case, the incentive of the enemy would be to decrease $\Omega(\cdot)$. In order to measure disconnectivity positively, the enemy could measure his gains with $n - \Omega(\cdot)$: Indeed, a $0$ gain would correspond to leaving the graph as is and $n - 1$ would correspond to creating an empty graph.

We define three vulnerability parameters: Component-order-toughness (introduced here for sake of completeness), integrity (introduced in 1987 by Barefoot, Entringer, and Swart~\cite{barefoot87}), and Component-order-connectivity (introduced in 1999 by Boesch, Gross, and Suffel~\cite{boesch99}).

\begin{definition}[Component Order Toughness]
The component-order-toughness $\tau^c(G)$ of a graph $G$ is defined to be
\[
\min\left\{ \frac{|S|}{n - \Omega(G - S)} \;\middle|\; S \text{ is a cutset of } G \right\}.
\]
A graph $G$ is said to be \emph{$t$-component-order-tough} if $\tau^c(G) \ge t$.
\end{definition}

\begin{definition}[Integrity]
The integrity $I(G)$ of a graph $G$ is defined to be
\[
\min\left\{ |S| + \Omega(G - S) \;\middle|\; S \subseteq V(G) \right\}.
\]
A graph $G$ is said to be \emph{$i$-integral} if $I(G) \ge i$.
\end{definition}

\begin{definition}[Component Order Connectivity]
The $\ell$-component-order-connectivity of $G$ is defined by
\[
\kappa^c_\ell(G) = \min\left\{ |S| \;\middle|\; \Omega(G - S) < \ell \right\}.
\]
A graph $G$ is said to be \emph{$(k, \ell)$-component-order-connected} (abbreviated $(k, \ell)$-COC) if $\kappa^c_\ell(G) \ge k$.
\end{definition}

Note that $\tau^c(G)$, $n - I(G)$, and $\kappa^c_{n-\ell}(G)$ correspond to $\tau(G)$, $\mathrm{sc}(G)$, and $\kappa_\ell(G)$ respectively, only with the enemy's gain measured by $n - \Omega$ instead of $\omega$.

\section{The Property Function}

In order to extend and unify several vulnerability properties, we make use of a ``property function'' $\psi_G : \N \to \N$ that relates the graph parameters involved in these properties to each other. More precisely, $\psi_G(x)$ will be the minimum cost to achieve a gain of~$x$.

We examine the case where the enemy measures his gain with $\omega(\cdot)$ (his cost still being the number of vertices he must remove). In this case:

\begin{definition}\label{def:propfunc}
Let $\psi_G(x)$ be the minimum number of vertices one must remove from $G$ to be left with exactly $x$ components.
\end{definition}

Formally, $\psi_G(x) = \min\{|S| \mid \omega(G - S) = x\}$. See Figure~\ref{fig:psi} for an example of such a function. In order to intuit this function, we now show and comment on several of its properties:

\begin{fact}\label{fact:psi}
For all graphs $G$ of order $n$,
\begin{enumerate}
\item[(a)] $\psi_G$ is well defined (only) for all $0 \le x \le \alpha(G)$.
\item[(b)] $\psi_G(x) \le n - x$.
\item[(c)] For all $x$ in the domain of $\psi_G$ and all $y$ with $\psi_G(x) \le y \le n - x$, there exists an $S \subseteq V(G)$ such that $|S| = y$ and $\omega(G - S) = x$.
\item[(d)] $\omega(G) = \psi_G^{-1}(0)$.
\item[(e)] $\psi_G(\omega(G) - i) - \psi_G(\omega(G) - i + 1)$ is the order of the $i$-th component of $G$ (in increasing order).
\end{enumerate}
\end{fact}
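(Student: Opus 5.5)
The plan is to verify the five items separately. The main tools are two elementary observations: deleting a non-cut vertex of a nontrivial component does not change the number of components, and deleting vertices can decrease $\omega$ only by deleting whole components.

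For (a) and (b), take $x$ with $0\le x\le\alpha(G)$. Fix an independent set $I$ of size $\alpha(G)$ and a subset $I'\subseteq I$ with $|I'|=x$. Deleting $S=V(G)\setminus I'$ leaves exactly $x$ isolated vertices, so $\psi_G(x)$ is defined. For $x=0$ this is $S=V(G)$. The same set has $|S|=n-x$, which gives (b). Conversely, suppose $\omega(G-S)=x$. Choosing one vertex from each component of $G-S$ gives an independent set of size $x$, so $x\le\alpha(G)$. This is the ``only'' part of (a).

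For (c), I will argue by induction on $y$, starting from a set $S_0$ with $|S_0|=\psi_G(x)$ and $\omega(G-S_0)=x$. Suppose $S$ satisfies $|S|=y<n-x$ and $\omega(G-S)=x$. Then $G-S$ has more than $x$ vertices but only $x$ components, so some component $C$ has at least two vertices. Let $v$ be a leaf of a spanning tree of $C$. Then $C-v$ is connected and nonempty, so $\omega(G-(S\cup\{v\}))=x$ and $|S\cup\{v\}|=y+1$. Iterating this step reaches every $y$ up to $n-x$. I expect this step to be the only place needing care, namely making sure a removable vertex always exists, and the counting argument above handles it.

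For (d), note that $\psi_G(x)=0$ forces $S=\emptyset$ and hence $x=\omega(G)$. Conversely, $S=\emptyset$ shows $\psi_G(\omega(G))=0$, so $\psi_G^{-1}(0)=\{\omega(G)\}$.

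For (e), let the components of $G$ have orders $s_1\le\dots\le s_{\omega(G)}$. I will show that $\psi_G(\omega(G)-i)=s_1+\dots+s_i$ for $0\le i\le\omega(G)$, and the stated differences then follow by subtraction. For the upper bound, delete the $i$ smallest components entirely; every other component stays intact. For the lower bound, let $S$ be any set. Every component of $G$ not contained in $S$ contributes at least one component to $G-S$, so $\omega(G-S)\ge\omega(G)-j$, where $j$ is the number of components of $G$ contained entirely in $S$. Thus $\omega(G-S)=\omega(G)-i$ forces $j\ge i$. Those $j$ components are disjoint and lie in $S$, so $|S|\ge s_1+\dots+s_i$.
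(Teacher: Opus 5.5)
Your proof is correct and follows the same route as the paper's: an independent set of size $\alpha(G)$ for (a)--(b), deleting a non-cut vertex of a nontrivial component for (c), $S=\emptyset$ for (d), and deleting whole smallest components for (e). Your version is more careful than the paper's sketch in two places: the counting step in (c) that guarantees a component of order at least two, and the explicit lower bound $j\ge i$ in (e).
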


\begin{proof}
The independence number $\alpha(G)$ is the maximum possible order of a subset of independent vertices of~$G$. Note that $\alpha(G)$ can be thought of as a vulnerability parameter in that it indicates the maximum number of isolated vertices the enemy could create by removing vertices of~$G$. Since a component has at least one vertex, no $S \subset V(G)$ could possibly yield $\omega(G - S) > \alpha(G)$, so $\psi_G(x)$ is undefined for $x > \alpha(G)$.

Now $\psi_G(\alpha(G))$ is defined since $G - I$, where $I$ is a maximal independent set of vertices, is a cutset creating exactly $\alpha(G)$ components, and removing these components (composed of single vertices) one by one, we can create any number $x$ of components, for $x$ ranging from $\alpha(G)$ to $0$. All graphs verify $\psi_G(0) = n$, since one would have to remove all vertices of the graph to be left with no components.

Since a component has at least one vertex, no $S \subset V(G)$ could possibly yield $\omega(G - S) > \alpha(G)$.

Part~(b) is true since a component must have at least 1 vertex.

Part~(c) comes from the fact that a connected component always contains a vertex that does not disconnect it and shows that $\psi_G$ gives us full information not only on the optimal, but on all possible (cost, gain) pairs the enemy can achieve.

Though there could be several values of $x$ for which $\psi_G(x) = y$ if $y > 0$, there is obviously only one $x$ for which $\psi_G(x) = 0$: Namely $x = \omega(G)$. This is what (d) states.

This implies that $G$ is connected if, and only if, $\psi_G(1) = 0$, and in this case (e) translates to $\psi_G(0) - \psi_G(1) = n$, indeed the order of the only component of $G$: Itself. If $G$ has $z$ components, in order to create $z - 1$ components, the enemy must totally remove one of the components, thus at least as many vertices as the smallest component has. Hence $\psi_G(\omega(G) - 1) = \psi_G(\omega(G) - 1) - \psi_G(\omega(G))$ is the order of the smallest component, $\psi_G(\omega(G) - 2) - \psi_G(\omega(G) - 1)$ is the order of the second smallest, etc.
\end{proof}

\begin{figure}[ht]
\centering
\includegraphics[width=0.7\textwidth]{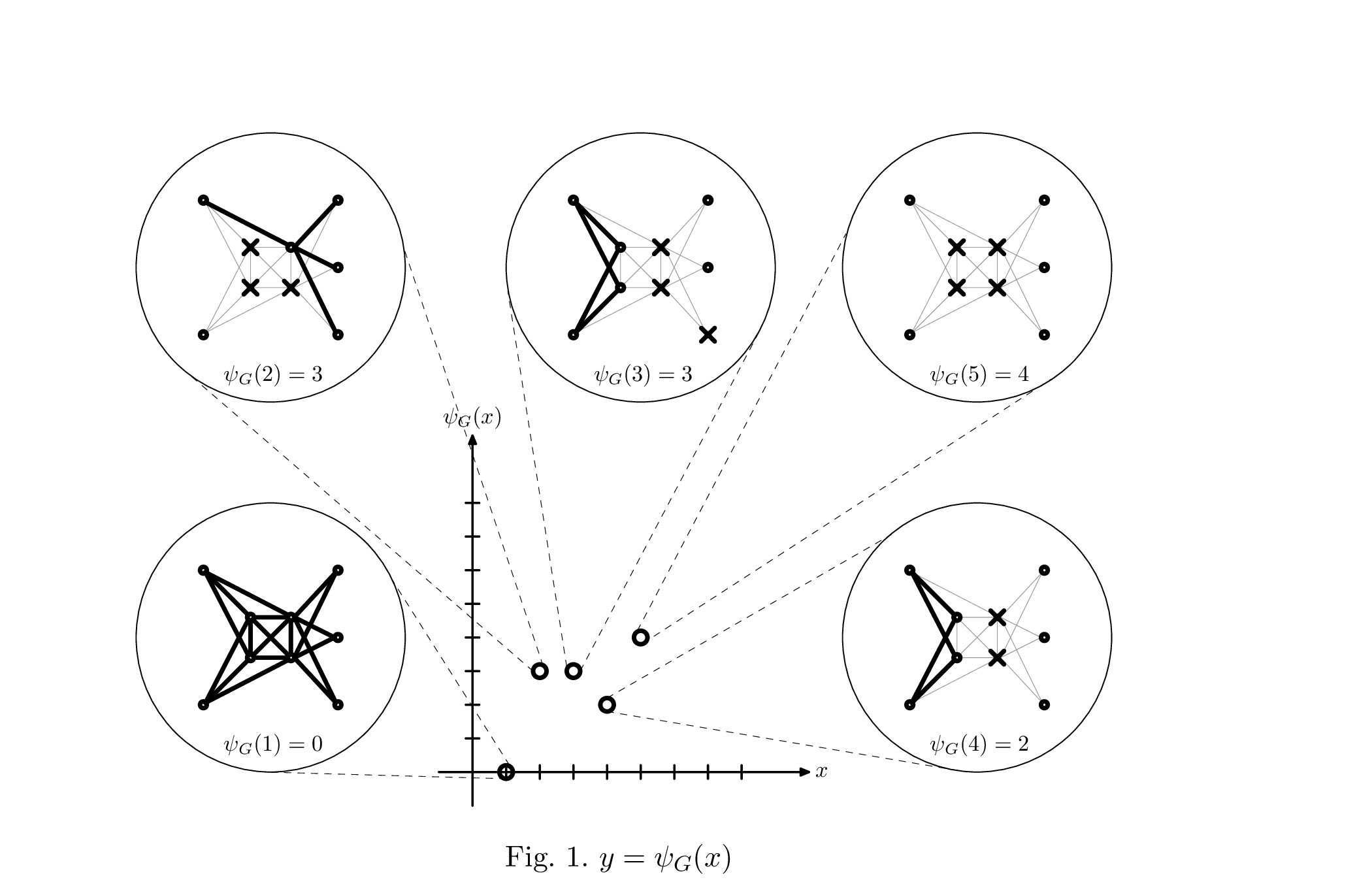}
\caption{$y = \psi_G(x)$}
\label{fig:psi}
\end{figure}

Moreover, we can extract from $\psi_G$ several of the vulnerability parameters we've introduced earlier:
\begin{enumerate}
\item[(1)] $\kappa(G) = \min\{\psi_G(x) \mid x \ge 2\}$
\item[(2)] $\tau(G) = \min\left\{\dfrac{\psi_G(x)}{x} \;\middle|\; x \ge 2\right\}$
\item[(3)] $\mathrm{sc}(G) = \max\{x - \psi_G(x) \mid x \ge 2\}$
\item[(4)] $\kappa_\ell(G) = \min\{\psi_G(x) \mid x \ge \ell\}$
\end{enumerate}

Therefore the properties of connectivity, toughness, ``unscatteredness'' and $(\ell)$-connectivity all convey the fact that the property function is above a given line, on the given interval $[\ell, \alpha]$. We are naturally led to define the following general vulnerability property.

\begin{definition}[$(f, \ell)$-connectivity]
Let $f : \R \to \R$ and $\ell \ge 0$. A graph $G$ is said to be \emph{$(f, \ell)$-connected} if $\psi_G(x) \ge f(x)$ for all $\ell \le x \le \alpha(G)$. If $f$ is the linear function $f(x) = tx + k$ we say that $G$ is \emph{$(t, k, \ell)$-connected}.
\end{definition}

This gives us a parametrization of vulnerability that is finer than, yet consistent with, the vulnerability parameters $\kappa$, $\kappa_\ell$, $\tau$, and $\mathrm{sc}$. Indeed, if $t \ge t'$, $k \ge k'$, and $\ell \le \ell'$, then $(t, k, \ell)$-connected implies $(t', k', \ell')$-connected, and we have the following correspondences:

\begin{fact}\label{fact:correspondences}
\begin{align*}
G \text{ is $k$-connected} &\iff G \text{ is $(0, k, 2)$-connected} \\
G \text{ is $t$-tough} &\iff G \text{ is $(t, 0, 2)$-connected} \\
G \text{ is $s$-unscattered} &\iff G \text{ is $(1, -s, 2)$-connected} \\
G \text{ is $(k, \ell)$-connected} &\iff G \text{ is $(0, k, \ell)$-connected}
\end{align*}
\end{fact}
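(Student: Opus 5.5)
The plan is to derive each of the four equivalences of Fact~\ref{fact:correspondences} from the corresponding extraction formula (1)--(4) stated just before the definition of $(f,\ell)$-connectivity. I unfold $(t,k,\ell)$-connectedness as the requirement $\psi_G(x) \ge tx + k$ for every integer $x$ with $\ell \le x \le \alpha(G)$. By Fact~\ref{fact:psi}(a), these are exactly the integers $x \ge \ell$ at which $\psi_G$ is defined. So in each case a statement of the form ``$\min$ (or $\max$) over $x \ge \ell$ is at least (or at most) some value'' becomes the pointwise statement ``for all $x$ in $[\ell,\alpha(G)]$.''

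The cases are then as follows.
\begin{itemize}
\item For $k$-connectivity, $\kappa(G) \ge k$ is by (1) equivalent to $\min_{x\ge 2}\psi_G(x) \ge k$. This holds iff $\psi_G(x) \ge 0\cdot x + k$ for all $2 \le x \le \alpha(G)$, which is $(0,k,2)$-connectivity.
\item The $(k,\ell)$-connected case is identical, using (4) with $\ell$ in place of $2$.
\item For toughness, $\tau(G) \ge t$ is by (2) equivalent to $\psi_G(x)/x \ge t$ for all $x \ge 2$. Since $x > 0$, this is $\psi_G(x) \ge tx + 0$.
\item For scattering, $\mathrm{sc}(G) \le s$ is by (3) equivalent to $x - \psi_G(x) \le s$ for all $x \ge 2$, that is, $\psi_G(x) \ge 1\cdot x - s$, which is $(1,-s,2)$-connectivity.
\end{itemize}

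The real content is justifying (1)--(4) themselves, since the paper asserts them without proof. The key step is to show that ranging over cutsets $S$ with $\omega(G-S)\ge 2$ is the same as ranging over $x \ge 2$ and taking an optimal $S$ with $\omega(G-S)=x$. For (2) and (3), each ratio or difference depends on $S$ only through $|S|$ and $\omega(G-S)$, and for fixed $x$ it is optimized by the smallest $|S|$, namely $\psi_G(x)$. For (1) and (4), $\min\{|S| : \omega(G-S)\ge \ell\} = \min_{x\ge\ell}\psi_G(x)$ follows directly by splitting on the value of $\omega(G-S)$.

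The main obstacle I expect is degenerate cases. One is when $\alpha(G) < \ell$, for instance a complete graph has no cutset, so the min over an empty set must be interpreted. Here I would adopt the conventions that $\kappa(K_n)=n-1$ and $\tau(K_n)=\infty$, and note that $(t,k,\ell)$-connectivity then holds vacuously. This matches $\tau$ and $\mathrm{sc}$ conventionally, though for $\kappa(K_n)=n-1$ the equivalence holds only for $k\le n-1$, so I would state this caveat. The other subtlety is that a ``cutset'' presumably requires $G-S$ to be disconnected, that is, $\omega(G-S)\ge 2$. This is exactly the restriction $x\ge 2$, and I would make it explicit.
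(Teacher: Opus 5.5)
Your proposal is correct and follows the paper's implicit route. The paper gives no separate proof: it presents the correspondences as an immediate consequence of the extraction formulas (1)--(4) read against the definition of $(t,k,\ell)$-connectivity, which is exactly your unfolding. You go further by justifying (1)--(4) through splitting on $x=\omega(G-S)$, and by flagging the degenerate case $\alpha(G)<\ell$. Note that the paper's later conventions $\tau(K_n)=n-1$ and $\mathrm{sc}(K_n)=-n$ give the same kind of caveat you noted for $\kappa(K_n)=n-1$, rather than the vacuous agreement you get with $\tau(K_n)=\infty$.
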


Saying that $\psi_G(x) \ge f(x)$ for all $\ell \le x \le \alpha(G)$ is equivalent to saying that the function $\psi_G$ does not intersect the region $R$ bounded by $x = \ell$, $y = 0$, $y = n - x$, and $y = f(x)$---where (only) the last boundary is not included in the region. If $P$ is a graph property we write $G \in P$ to indicate that $G$ has property~$P$, and $P \subseteq P'$ to mean that whenever a graph has property~$P$, it necessarily has property~$P'$. In general, it will often be useful to express the fact that $G \in P$ as $\psi_G \cap R_P = \emptyset$ where $R_P$ is the forbidden region associated to~$P$, and $\psi_G$ is identified with the set of pairs $(x, \psi_G(x))$.

Note that $P \subseteq P'$ if, and only if, $R_P \subseteq R_{P'}$. This perspective allows us to relate the different vulnerability parameters to each other with ease, establishing several inequalities that can be found throughout the literature. For example, the inequalities of Fact~\ref{fact:inequalities} can readily be verified graphically (see Table~\ref{tab:implications}).

\begin{fact}\label{fact:inequalities}
Given a graph $G$ of order $n$,
\begin{enumerate}
\item[(1)] $\kappa(G) \ge 2\tau(G)$ as observed in~\cite{chvatal73}.
\item[(2)] $\tau(G) \ge \frac{\kappa(G)}{\alpha(G)}$ as observed in~\cite{chvatal73}.
\item[(3)] $\tau(G) \ge \frac{k}{n - k}$ for $k$-connected graphs.
\item[(4)] $\alpha(G) \le n - \kappa(G)$.
\item[(5)] $\alpha(G) \le \frac{n}{\tau(G) + 1}$ as observed in~\cite{chvatal73} and~\cite{alon95}.
\item[(6)] $\mathrm{sc}(G) \le \alpha(G) - \kappa(G)$ as observed in~\cite{kirlangic02}.
\item[(7)] $\mathrm{sc}(G) \le \max\{1, n - 2k\}$ as observed in~\cite{zhang01}.
\item[(8)] $\mathrm{sc}(G) \le \max\left\{1,\; n\,\frac{1 - \tau(G)}{\tau(G) + 1}\right\}$.
\end{enumerate}
\end{fact}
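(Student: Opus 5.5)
Each of the eight inequalities will be read off the graph of $\psi_G$, using only three constraints that $\psi_G$ always satisfies on its domain $2 \le x \le \alpha(G)$. These are the ``ceiling'' $\psi_G(x) \le n - x$ from Fact~\ref{fact:psi}(b), and two ``floors'' obtained from the extraction formulas (1)--(3) for $\kappa$, $\tau$, $\mathrm{sc}$:
\[
\psi_G(x) \ge \kappa(G), \qquad \psi_G(x) \ge \tau(G)\,x, \qquad x - \psi_G(x) \le \mathrm{sc}(G).
\]
I will also use that the relevant extremum is attained at some $x_0$ with $2 \le x_0 \le \alpha(G)$. Throughout I assume $G$ is not complete, so that $\alpha(G) \ge 2$ and cutsets exist. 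The complete case is handled by the usual conventions and I will dispose of it separately at the start.

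The inequalities pairing two parameters come from evaluating one floor at the extremal point of the other. For (1), choose $x_0 \ge 2$ with $\psi_G(x_0) = \kappa(G)$. Then $\tau(G) \le \psi_G(x_0)/x_0 \le \kappa(G)/2$. For (2), choose $x_0$ realizing $\tau$. Then $\tau(G) = \psi_G(x_0)/x_0 \ge \kappa(G)/\alpha(G)$, since $\psi_G(x_0) \ge \kappa(G)$ and $x_0 \le \alpha(G)$. For (6), for every $x$ in the domain we have $x - \psi_G(x) \le \alpha(G) - \kappa(G)$.

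The inequalities involving $n$ come from combining a floor with the ceiling at the relevant point. For (4), take $x = \alpha(G)$: then $\kappa(G) \le \psi_G(\alpha(G)) \le n - \alpha(G)$. For (5), the same point gives $\tau(G)\alpha(G) \le \psi_G(\alpha(G)) \le n - \alpha(G)$, and rearranging yields $\alpha(G) \le n/(\tau(G)+1)$. For (3), at any $x$ the ceiling gives $x \le n - \psi_G(x)$, so
\[
\frac{\psi_G(x)}{x} \ge \frac{\psi_G(x)}{n - \psi_G(x)}.
\]
This right-hand side is increasing in $\psi_G(x)$, and $\psi_G(x) \ge k$, so it is at least $k/(n-k)$.

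For (7), with $k = \kappa(G)$, every $x$ in the domain satisfies $x - \psi_G(x) \le (n - \psi_G(x)) - \psi_G(x) \le n - 2k$. For (8), every $x$ satisfies $x - \psi_G(x) \le (1 - \tau(G))x$. Combining $\tau(G)x \le \psi_G(x) \le n - x$ gives $x \le n/(1+\tau(G))$. If $\tau(G) < 1$, this yields the bound $n\,\frac{1-\tau(G)}{1+\tau(G)}$. If $\tau(G) \ge 1$, then $x - \psi_G(x) \le 0$, which is covered by the $1$ in the maximum.

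I expect the main obstacle to be bookkeeping rather than any real inequality. Specifically, I must check three things. First, each extremum is actually attained inside $[2, \alpha(G)]$, which needs $\alpha(G) \ge 2$. Second, the terms $\max\{1, \cdot\}$ in (7) and (8) correctly absorb the degenerate cases, namely complete graphs and graphs with $\tau(G) \ge 1$. Third, the cutset-based definitions of $\tau$ and $\mathrm{sc}$ agree with the minimum and maximum over $x \ge 2$ in the extraction formulas. I will handle these first, so that each item reduces to the one-line geometric comparison above.
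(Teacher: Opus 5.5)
Your proposal is correct and is essentially the paper's argument made explicit: the paper only asserts that these inequalities ``can readily be verified graphically'' via inclusions of forbidden regions (Table~\ref{tab:implications}), and your floor/ceiling comparisons on the graph of $\psi_G$ over $2\le x\le\alpha(G)$ are exactly those region inclusions, checked pointwise. One caveat: you cannot dispose of the complete case by convention in item (1). With the paper's convention $\kappa(K_n)=\tau(K_n)=n-1$, and equally with the usual $\tau(K_n)=\infty$, one gets $\kappa(K_n)<2\tau(K_n)$ for $n\ge 2$. So (1) must be stated for non-complete graphs, as in Chv\'{a}tal; the other seven items do hold for $K_n$ under the paper's conventions.
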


\begin{table}[ht]
\centering
\caption{Implications. In each of these cases, $R$ and $R'$ are regions which, when forbidden, correspond to graph properties. In each case, $R' \subseteq R$ so if $R$ is forbidden, $R'$ is necessarily forbidden. Therefore the property corresponding to $R$ implies the property corresponding to $R'$.}
\label{tab:implications}
\vspace{1em}
\includegraphics[width=0.9\textwidth]{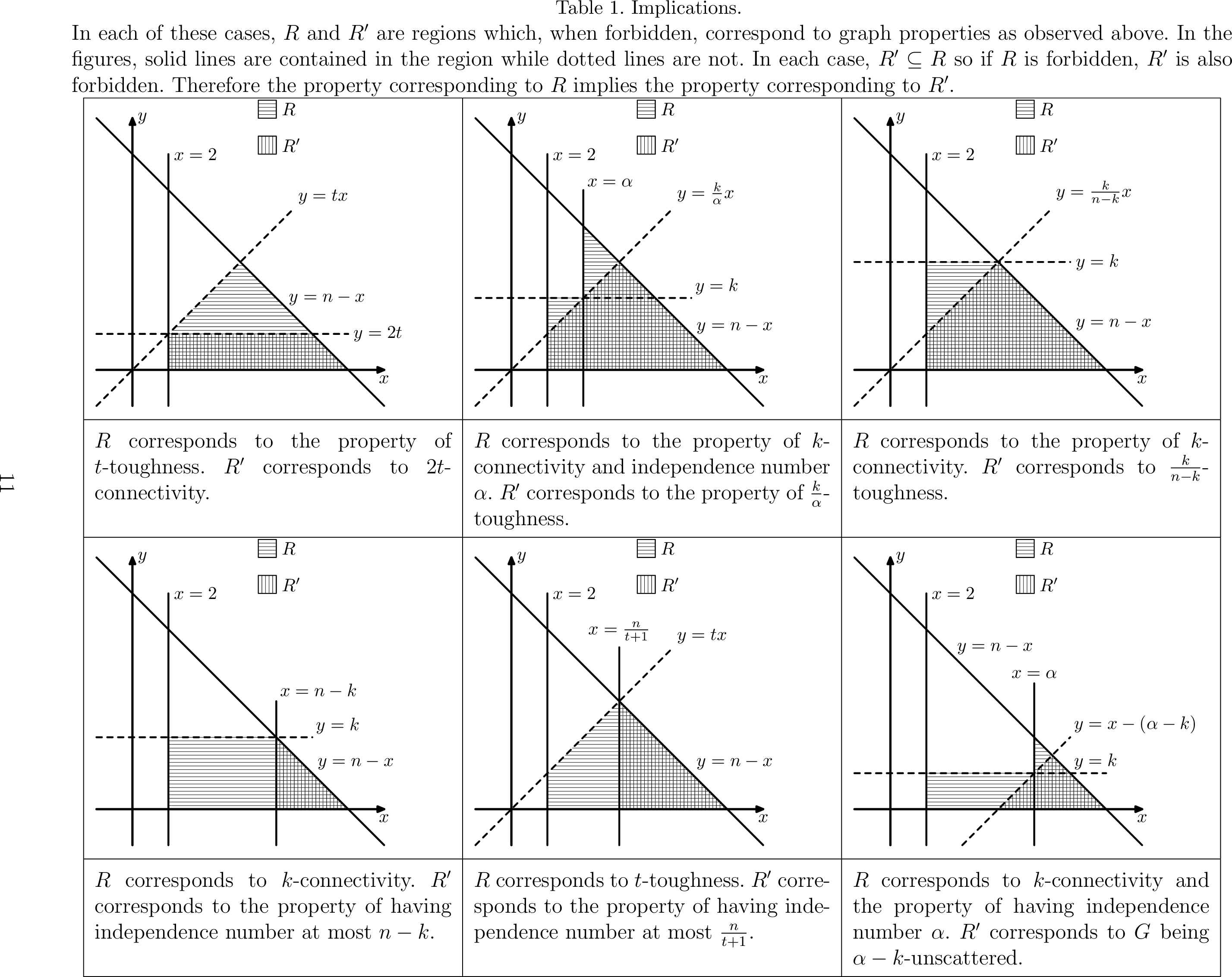}
\end{table}

\section{The Density Function}

Given two graphs $H$ and $F$, we write $H \preceq F$ if $|H| = |F|$ and $H$ is a subgraph of~$F$. A graph parameter $\mu : \G \to \R$ is said to be \emph{increasing} (resp.\ \emph{decreasing}) if adding edges to a graph will not decrease (resp.\ increase) the value of the parameter. A \emph{monotone} parameter is one that is either increasing or decreasing.

Formally, $\mu$ is increasing (resp.\ decreasing) if for all graphs, $H \preceq F \Rightarrow \mu(H) \le \mu(F)$ (resp.\ $F \preceq H \Rightarrow \mu(H) \le \mu(F)$).

Note that $\Omega$, $\kappa$, $\tau$, $\kappa_\ell$, $\tau^c$, $I$, and $\kappa^c_\ell$ are increasing, whereas $\alpha$, $\omega$, and $\mathrm{sc}$ are decreasing.

A graph property $P$ is \emph{increasing} if for any $H \in P$, $H \preceq F \Rightarrow F \in P$.

Any increasing (resp.\ decreasing) parameter $\mu : \G \to \R$ can naturally be associated with a family $P^\mu_m$ of increasing ``$\mu$-properties'' where, for $r \in \R$,
\[
P^\mu_m = \{G \mid \mu(G) \ge m\} \quad (\text{resp.}\ P^\mu_m = \{G \mid \mu(G) \le m\}).
\]

Technically, many of the parameters we've introduced so far are not monotone in that they are not defined for all graphs; namely on the more (edge-)dense graphs. More precisely, $\kappa$, $\tau$, $\mathrm{sc}$, and $\tau^c$ were not defined for the complete graph $K_n$, and $\kappa_\ell$ was not defined for any graphs $G$ with $\alpha(G) < \ell$. Yet these can all naturally be extended to be monotone parameters by letting $\kappa(K_n) = n - 1$, $\tau(K_n) = n - 1$, $\mathrm{sc}(K_n) = -n$, $\tau^c(K_n) = 1$, and for all $G$ such that $\alpha(G) < \ell$, $\kappa_\ell(G) = n - \alpha(G)$.

So monotone parameters $\mu$ all reflect, in their own way, how dense a graph is, but their associated increasing properties $P^\mu_m$ all converge towards the densest graph of all---the complete graph $K_n$. Therefore, for any fixed $n$, any increasing property $P$, and any monotone parameter $\mu$, there exists an $m$ such that $P^\mu_m \subseteq P$ (since there are only finitely many graphs of order~$n$).

In the case of an increasing parameter this means that we can always choose $m$ large enough so that $\mu(G) \ge m$ implies that $G$ has property~$P$. But how large must $m$ be to oblige $P$? In other words, how large can $\mu(G)$ be if $G \notin P$?

From now on, ``optimal'' should be understood to mean maximal when we are referring to increasing monotonicity and minimal when we are referring to decreasing monotonicity. We will also use the notation $\mathrm{opt}$ instead of $\max$ and $\min$ in order to make general statements.

\begin{definition}[$\mu$-threshold]
Given a monotone parameter $\mu$ and monotone property $P$, the \emph{$\mu$-threshold} for property $P$ is defined to be the optimal value $T$ of $\mu(G)$ over all graphs $G$ which do not have property~$P$.
\end{definition}

In the case of an increasing parameter $\mu$, the number $T$ is a threshold for $P$ in that there are graphs $G$ with $\mu(G) \le T$ that don't have the property but as soon as $\mu(G) > T$, $G$ must necessarily have the property.

We have seen how the vulnerability properties we introduced earlier could be encoded by an appropriate ``property function''~$\psi$. We now present a method to determine the $\mu$-threshold of these properties by transforming the problem to the task of optimizing a given function (determined by~$\mu$) over a given region (determined by the particular property we are looking at) of the range of the property function. For this purpose, we define the ``density function'' $\varphi^\psi_\mu$ for the parameter~$\mu$ and property function~$\psi$:

\begin{definition}
Given a density parameter $\mu$, a ``property function''~$\psi$, and number of vertices~$n$, let
\[
\varphi^\psi_\mu(x, y) := \mathrm{opt}\{\mu(G) \mid |G| = n,\; \psi_G(x) = y\}.
\]
\end{definition}

In other words, $\varphi^\psi_\mu(x, y)$ is the maximum value $\mu(G)$ can attain for a graph $G$ of order $n$ satisfying $\psi_G(x) = y$. When the context is clear, we will drop the superscript~$\psi$. See Figure~\ref{fig:phi} for two examples of $\varphi$ functions, one using the edge count $\mu(G) = |E(G)|$ and the other using the minimum degree $\mu(G) = \delta(G)$.

\begin{figure}[ht]
\centering
\includegraphics[width=0.9\textwidth]{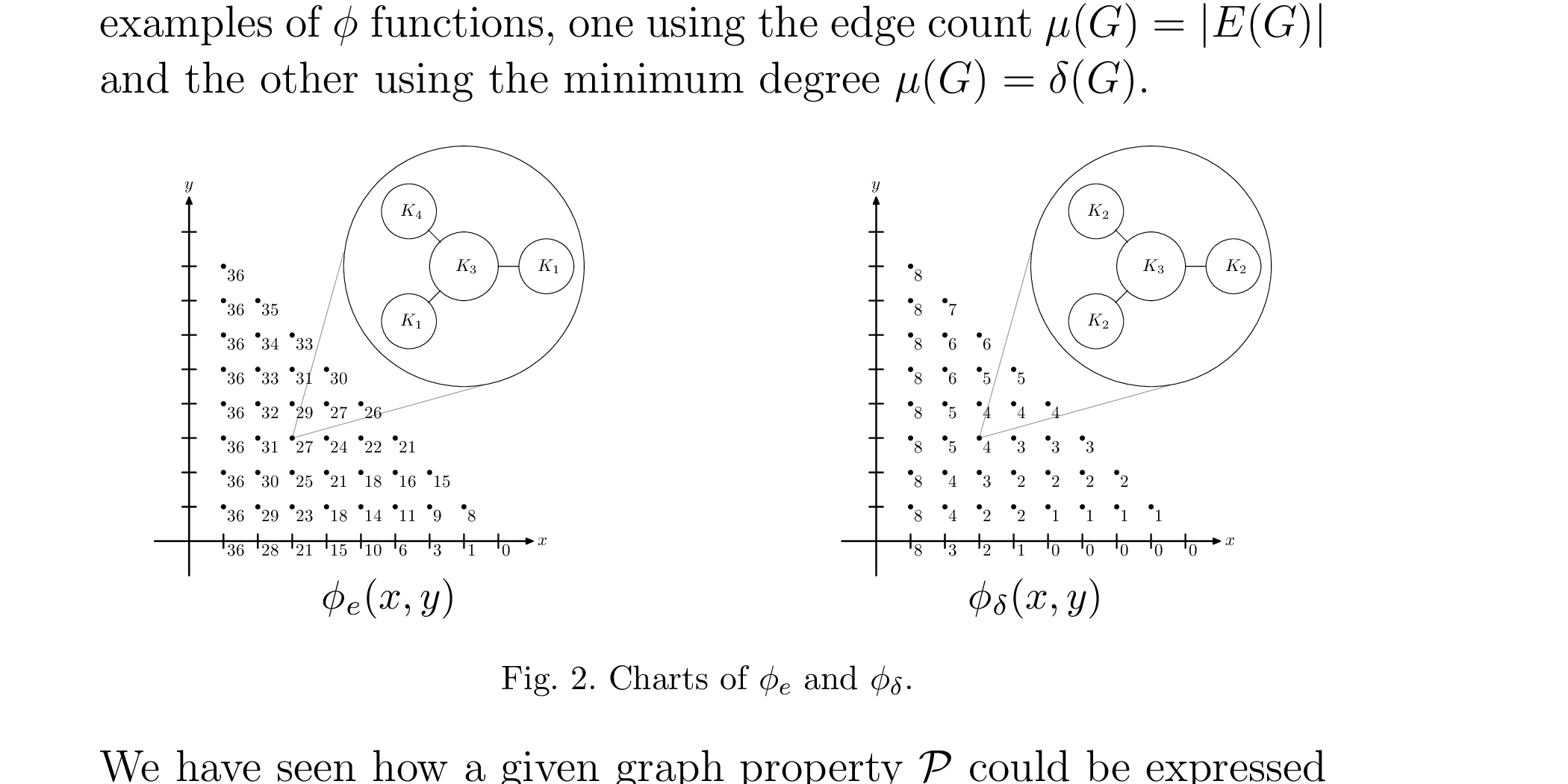}
\caption{Charts of $\varphi_e$ and $\varphi_\delta$.}
\label{fig:phi}
\end{figure}

We have seen how a given graph property $P$ could be expressed by forbidding $\psi$ from intersecting a given region $R_P$ of $\N^2$ (see Table~\ref{tab:implications}). This perspective facilitates finding the $\mu$-threshold: Indeed, the $\mu$-threshold for $P$ will simply be the maximum value of $\varphi^\psi_\mu(x, y)$ over the region $R_P$, as stated in the following Lemma.

\begin{lemma}\label{lem:threshold}
The $\mu$-threshold for $P$ is $T = \mathrm{opt}_{R_P} \varphi^\psi_\mu$.
\end{lemma}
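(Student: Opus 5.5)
The plan is to unwind the definitions and exchange two optima. Fix $n$ and work only with graphs of order $n$; there are finitely many, so every $\mathrm{opt}$ below is attained. By the discussion preceding the lemma, a graph $G$ fails property $P$ exactly when its property function meets the forbidden region, i.e.\ $\psi_G \cap R_P \neq \emptyset$. Equivalently, there is some $(x,y) \in R_P$ with $x$ in the domain of $\psi_G$ and $\psi_G(x) = y$. Thus the set of graphs not in $P$ decomposes as
\[
\{G \mid |G|=n,\ G\notin P\} \;=\; \bigcup_{(x,y)\in R_P} \{G \mid |G| = n,\ \psi_G(x) = y\}.
\]

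Next I take the optimum of $\mu$ over both sides. An optimum over a union of sets is the optimum of the optima over the pieces. The optimum of $\mu$ over the piece indexed by $(x,y)$ is, by definition, $\varphi^\psi_\mu(x,y)$. Hence
\[
T = \mathrm{opt}\{\mu(G) \mid G \notin P\} = \mathrm{opt}_{(x,y)\in R_P} \varphi^\psi_\mu(x,y).
\]
This holds when $\mathrm{opt}$ is read as $\max$ for increasing $\mu$ and as $\min$ for decreasing $\mu$, matching the convention fixed just before the definition of the $\mu$-threshold.

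The only point needing care is points $(x,y) \in R_P$ realized by no graph. At such a point the defining set of $\varphi^\psi_\mu(x,y)$ is empty, and we adopt the convention $\varphi = -\infty$ (respectively $+\infty$ in the minimizing case). These points then contribute nothing to the union or to the outer optimum. I expect this bookkeeping to be the only real obstacle; Fact~\ref{fact:psi}(c) shows most points under $y=n-x$ in the domain are in fact realized.

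One might worry whether the threshold should use the monotone closure. That is, should we optimize over all graphs not in $P$, or only over extremal ones? The definition of $\mu$-threshold uses all graphs not in $P$, so no monotonicity of $P$ is actually needed for this identity. Monotonicity only matters for interpreting $T$ as a sharp cutoff, which is a separate remark.
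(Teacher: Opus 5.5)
Your proof is correct and is essentially the paper's argument. The paper shows $T \le \mathrm{opt}_{R_P}\varphi^\psi_\mu$ by noting that any $G \notin P$ has a point $(x,y) \in \psi_G \cap R_P$, so $\mu(G)$ is bounded by $\varphi^\psi_\mu(x,y)$; it gets the reverse inequality by taking a $\mu$-optimal graph in the fiber over an optimal point of $R_P$. Your identity ``opt over a union equals the opt of the fiberwise opts'' packages both steps at once, and your explicit convention for unrealized points of $R_P$ (empty fibers) and your simultaneous treatment of increasing and decreasing $\mu$ fill in details the paper leaves implicit.
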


\begin{proof}
We will prove the case where $\mu$ is monotone increasing, the decreasing case being similar.

By the definition of $\varphi^\psi_\mu$, for any graph $G$ and $(x, y) \in \psi_G$, $\mu(G)$ can be no more than $\varphi^\psi_\mu(x, y)$. Therefore,
\begin{align*}
\mu(G) &\le \min\{\mu(G) \mid (x, y) \in \psi_G\} \\ 
       &\le \min\{\mu(G) \mid (x, y) \in \psi_G \cap R_P\} \\
       &\le \max\{\mu(G) \mid (x, y) \in \psi_G \cap R_P\} \\
       &\le \max\{\mu(G) \mid (x, y) \in R_P\}.
\end{align*}
Thus, $T \le \max_{R_P} \varphi^\psi_\mu$. Now let $(x_0, y_0) \in R_P$ be a pair that maximizes $\varphi^\psi_\mu$ in $R_P$ and $G$ be a graph that maximizes $\mu$ in $\{G \in \G \mid \psi_G(x_0) = y_0\}$. Observe that $G \notin P$, so $T \ge \mu(G) = \max_{R_P} \varphi^\psi_\mu$, proving the Lemma.
\end{proof}

\section{Results}

In the sections above we have developed tools to determine, for any two monotone parameters $\mu$ and $\rho$, the $\mu$-threshold of a $\rho$-property. We will exemplify this approach on the vulnerability parameters introduced earlier. Fact~\ref{fact:psi} hinted at some of these thresholds, but tightness was not proven. Also, we will add to our list three customary density parameters: The edge count $e(G)$, the minimum degree $\delta(G)$, and the neighborhood intersection $N^j_\cap(G)$---the minimum, over all choices of $j$ vertices from $V(G)$, order of the common neighborhood of those $j$ vertices. This last parameter has been studied as a density measure in sufficient conditions for hamiltonicity~\cite{gould03,fan04}, as a property of random graphs~\cite{gerke04} and it has arisen in a variety of other situations as well. Note that $N^j_\cap(G)$ extends the minimum degree in that $N^1_\cap(G) = \delta(G)$.

To summarize, in order to find the $\mu$-threshold of a property $P$, one can proceed in four phases as follows:

\medskip
\noindent\textbf{I.}~Define a property function $\psi$ such that for some region $R_P$, we have $G \in P \iff \psi_G \cap R_P = \emptyset$.

\noindent\textbf{II.}~Compute the density function $\varphi^\psi_\mu$.

\noindent\textbf{III.}~Optimize $\varphi^\psi_\mu$ over $R_P$ to get the desired threshold.

\medskip
Regarding phase~\textbf{I}, for the vulnerability properties associated to parameters involving $\omega$ (namely, $\kappa$, $\tau$, $\mathrm{sc}$, and $\kappa_\ell$) we will use the property function $\psi^\omega_G(x) = \min\{|S| \mid \omega(G - S) = x\}$. For vulnerability properties involving $\Omega$ (namely, $\tau^c$, $I$, and $\kappa^c_\ell$) we will use the property function $\psi^\Omega_G(x) = \min\{|S| \mid \Omega(G - S) = x\}$.

The following considerations will be useful for phase~\textbf{II} of our approach. Given $y \in \N$ and a vector of positive integers $\vec{x} = (x_1, \ldots, x_z)$, let $\G_n(\vec{x}, y)$ be the family of graphs of order $n$ that can be decomposed into $z + 1$ disjoint subgraphs $Y, X_1, \ldots, X_z$ where $|Y| = y$, $(|X_1|, \ldots, |X_z|) = \vec{x}$, and $X_1, \ldots, X_z$ are connected mutually independent subgraphs (i.e.\ $E(X_i, X_j) = \emptyset$ for all $i \ne j$). Further, we define
\[
K(\vec{x}, y) = K_y + \left(\bigcup_{i=1}^{z} K_{x_i}\right),
\]
where $+$ represents the join operation (whereby all possible edges between the operand graphs are inserted).

Observe that for any density parameter $\mu$, the graph $K(\vec{x}, y)$ optimizes $\mu$ within $\G_n(\vec{x}, y)$. This implies that to compute $\varphi^\psi_\mu(x, y)$ we need only to optimize $\mu$ over those $K(\vec{x}, y)$ that satisfy $\psi_{K(\vec{x}, y)}(x) = y$.

Let $|\vec{x}| = z$, $\|\vec{x}\|_\infty = \max_i x_i$, and $\|\vec{x}\|_0 = \min_i x_i$. The reader may verify that $\psi^\omega_G(x) = y$ if, and only if, $G \in \bigcup_{|\vec{x}| = x} \G_n(\vec{x}, y)$, so
\begin{equation}\label{eq:phi_omega}
\varphi^{\psi^\omega}_\mu(x, y) = \mathrm{opt}\{\mu(K(\vec{x}, y)) \mid |\vec{x}| = x\}.
\end{equation}
Similarly, $\psi^\Omega_G(x) = y$ if, and only if, $G \in \bigcup_{\|\vec{x}\|_\infty = x} \G_n(\vec{x}, y)$, so
\begin{equation}\label{eq:phi_Omega}
\varphi^{\psi^\Omega}_\mu(x, y) = \mathrm{opt}\{\mu(K(\vec{x}, y)) \mid \|\vec{x}\|_\infty = x\}.
\end{equation}

Hence we first compute $\mu(K(\vec{x}, y))$ for our parameters of interest:

\begin{center}
\begin{tabular}{ll}
\toprule
$\mu$ & $\mu(K(\vec{x}, y))$ \\
\midrule
$\delta$ & $y + \|\vec{x}\|_0$ \\
$e$ & $y(n - y) + \binom{y}{2} + \sum \binom{x_i}{2}$ \\
$N^j_\cap$ & $y$ \quad (for $j \ge 2$) \\
$\kappa$ & $y$ \\
$\kappa_\ell$ & $\begin{cases} y & \text{if } \ell \le |\vec{x}| \\ n - \ell & \text{if } \ell > |\vec{x}| \end{cases}$ \\
$\tau$ & $\dfrac{y}{|\vec{x}|}$ \\
$\mathrm{sc}$ & $|\vec{x}| - y$ \\
$\kappa^c_\ell$ & $\begin{cases} y & \text{if } \ell \ge \|\vec{x}\|_\infty \\ n - \|\vec{x}\|_\infty & \text{if } \ell < \|\vec{x}\|_\infty \end{cases}$ \\
$I$ & $\|\vec{x}\|_\infty + y$ \\
$\tau^c$ & $\dfrac{y(\|\vec{x}\|_\infty + n)}{n^2}$ \\
\bottomrule
\end{tabular}
\end{center}

Using~\eqref{eq:phi_omega} and~\eqref{eq:phi_Omega}, we can now compute our density functions (see Table~\ref{tab:density}).

Finally, we can determine the $\mu$-thresholds of our properties of interest by optimizing the density function over the appropriate region. Results are exhibited below (see Tables~\ref{tab:delta_results} and~\ref{tab:property_thresholds}).

\begin{table}[p]
\centering
\caption{Table of density functions. Here $\gamma = \frac{n-y}{x}$, $\lambda = \frac{n - y - x}{x}$, $r$ is the remainder of the Euclidean division of $n - y$ by $x$, and $r'$ is the remainder of the Euclidean division of $n - y - x$ by $\lceil\lambda\rceil$.}
\label{tab:density}
\vspace{1em}

\renewcommand{\arraystretch}{1.8}
\begin{tabular}{l|ccc}
\toprule
$\mu$ & $\delta$ & $e$ & $N^j_\cap$ \\
\midrule
$\varphi^{\psi^\omega}_\mu$ & $\left\lfloor\gamma\right\rfloor + y - 1$ & $\binom{n-x+1}{2} + y(x-1)$ & $\begin{cases} y & j \le n - y \\ n - j & j > n - y \end{cases}$ \\[6pt]
$\varphi^{\psi^\Omega}_\mu$ & $\left\lfloor\frac{n-y-x}{\lceil\gamma\rceil - 1}\right\rfloor + y - 1$ & $\frac{y^2 + ny - y}{2} + \lfloor\gamma\rfloor\binom{x}{2} + \binom{r}{2}$ & $\begin{cases} y & j \le n - y \\ n - j & j > n - y \end{cases}$ \\
\bottomrule
\end{tabular}

\vspace{2em}

\renewcommand{\arraystretch}{1.8}
\begin{tabular}{l|cccc}
\toprule
$\mu$ & $\kappa$ & $\tau$ & $\mathrm{sc}$ & $\ell$-conn. \\
\midrule
$\varphi^{\psi^\omega}_\mu$ & $y$ & $\dfrac{y}{x}$ & $x - y$ & $\begin{cases} y & x \ge \ell \\ n - \ell & x < \ell \end{cases}$ \\[8pt]
$\varphi^{\psi^\Omega}_\mu$ & $y$ & $\dfrac{y}{\lceil\lambda\rceil + 1}$ & $\lceil\lambda\rceil + 1 - y$ & $\begin{cases} y & \lceil\lambda\rceil \ge \ell \\ n - \ell & \lceil\lambda\rceil < \ell \end{cases}$ \\
\bottomrule
\end{tabular}

\vspace{2em}

\renewcommand{\arraystretch}{1.8}
\begin{tabular}{l|ccc}
\toprule
$\mu$ & $\tau^c$ & int. & $\kappa^c_\ell$ \\
\midrule
$\varphi^{\psi^\omega}_\mu$ & $\dfrac{y}{y + x - 1}$ & $n - x + 1$ & $\begin{cases} y & \ell > n - y - x + 1 \\ n - x - \ell & \ell \le n - y - x + 1 \end{cases}$ \\[8pt]
$\varphi^{\psi^\Omega}_\mu$ & $\dfrac{y}{n - x}$ & $x + y$ & (see text) \\
\bottomrule
\end{tabular}
\end{table}

\begin{table}[p]
\centering
\caption{Table of results for $\delta(G)$ and $e(G)$.}
\label{tab:delta_results}
\vspace{1em}

\renewcommand{\arraystretch}{2.2}
\begin{tabular}{l|l}
\toprule
Property & $\delta$-Threshold \\
\midrule
$k$-Connectivity & $\left\lfloor\dfrac{n + k - 3}{2}\right\rfloor$ \quad \cite{chartrand68,boesch74} \\[6pt]
$t$-Toughness ($t > 0$) & $\max\left\{\left\lfloor\dfrac{n + 2t - 3}{2}\right\rfloor,\; \left\lfloor\dfrac{n - \lceil t\lfloor\gamma\rfloor\rceil}{\lfloor\gamma\rfloor}\right\rfloor + \lceil t\lfloor\gamma\rfloor\rceil - 1,\; \left\lfloor\dfrac{\lceil\gamma\rceil + 1}{\lceil\gamma\rceil}\right\rfloor + n - \lceil\gamma\rceil - 2\right\}$ \\
& where $\gamma = \frac{n}{t+1}$. This improves a bound in~\cite{bauer90}. If $t \le 0$, the threshold is $\left\lfloor\frac{n + 2t - 3}{2}\right\rfloor$. \\[6pt]
$s$-Unscattered & $\max\left\{\left\lfloor\dfrac{n - s - 1}{2}\right\rfloor,\; \left\lfloor\dfrac{\lceil n/2\rceil + s}{\lfloor n/2 \rfloor}\right\rfloor + \left\lceil\dfrac{n}{2}\right\rceil + s - 1,\; \left\lceil\dfrac{n}{2}\right\rceil - 1\right\}$ \\[6pt]
$\kappa_\ell \ge k$ & $\left\lfloor\dfrac{n + k\ell}{k}\right\rfloor + k - 1$ \quad \cite{oellermann87} \\[6pt]
$i$-Integrity & $i - 2$ \quad \cite{goddard90} \\[6pt]
$(k, \ell)$-$\kappa^c$ & $\left\lfloor\dfrac{n - k - \xi + 1}{\left\lceil\frac{n - k + 1}{\xi}\right\rceil - 1}\right\rfloor + k - 2$ \\
& where $\xi = \begin{cases} \ell & \text{if } \ell \text{ divides } n - k - 1 \\ \ell - \left(\left\lceil\frac{n-k-1}{\ell}\right\rceil\cdot\ell - (n-k-1)\right) & \text{otherwise} \end{cases}$ \quad [No prior bounds existed for this parameter, which was only recently defined in~\cite{boesch99}.] \\
\bottomrule
\end{tabular}
\end{table}

\begin{table}[p]
\centering
\caption{Property thresholds. The thresholds preceded by $\sim$ are approximate (see text for exact expressions).}
\label{tab:property_thresholds}
\vspace{1em}

\renewcommand{\arraystretch}{2.0}
\resizebox{\textwidth}{!}{%
\begin{tabular}{l|cccccc}
\toprule
 & $\kappa(G) \ge k$ & $\kappa_\ell \ge k$ & $\tau(G) \ge t$ & $\mathrm{sc}(G) \le s$ & $\kappa^c_\ell \ge k$ & $I(G) \ge i$ \\
\midrule
$\kappa(G)$ & $k - 1$ & $k - 1$ & $\left\lceil\dfrac{nt}{t+1}\right\rceil - 1$ & $\left\lfloor\dfrac{n-s}{2}\right\rfloor - 1$ & $k - 1$ & $i - 2$ \\[6pt]
$\tau(G)$ & $\dfrac{k-1}{2}$ & $\dfrac{k-1}{\ell}$ & $\sim t$ & * & $\sim \dfrac{(\ell-1)(k-1)}{n-k+1}$ & $\sim \dfrac{i-2}{n-i+2}$ \\[6pt]
$\mathrm{sc}(G)$ & $3 - k$ & $\ell - k + 1$ & ** & $s + 1$ & $\sim \dfrac{n-k+1}{\ell-1} - k + 2$ & $\sim \dfrac{n-1}{i-2}$ \\[6pt]
$I(G)$ & $n - 1$ & $n - \ell + 1$ & $n - 1$ & $n - s - 1$ & $k + \ell - 1$ & $i - 1$ \\[6pt]
$\tau^c(G)$ & $\dfrac{k-1}{k}$ & $\dfrac{k-1}{k + \ell - 2}$ & $\sim \dfrac{t}{t+1}$ & *** & $\dfrac{k}{n - \ell + 1}$ & $\dfrac{1}{n - i + 1}$ \\
\bottomrule
\end{tabular}%
}

\vspace{1em}
{\small
* $\frac{n-s-2}{n+s-2}$ if $s > 0$;\quad $\frac{\lceil n/2\rceil - 1}{\lfloor n/2\rfloor + 2}$ if $s = 0$;\quad $\frac{1-s}{2}$ if $s < 0$.

** $\frac{1}{t}$ if $t \le 1$;\quad $2\left\lfloor\frac{n}{t+1}\right\rfloor + 2 - n$ if $t > 1$.

*** $\frac{\lfloor(x+s)/2\rfloor - s - 1}{2\lfloor(x-s)/2\rfloor - s - 2}$
}
\end{table}

\clearpage

As we have seen, $k$-connectivity, $t$-toughness, $s$-unscatteredness, and $(k, \ell)$-connectivity are all instances of the more general $(t, k, \ell)$-connectivity we've introduced. To conclude, we will hence determine the expression of the $\delta$-threshold for $(t, k, \ell)$-connectivity. Other $\mu$-thresholds may be determined similarly.

\begin{theorem}\label{thm:main}
Given a linear function $f(x) = tx + k$, the $\delta$-threshold for $f$-connectivity is given by:
\[
\max\left\{
\left\lfloor\frac{n + (\ell - 1)\lceil\ell t + k - 1\rceil}{\ell}\right\rfloor - 1,\quad
\left\lfloor\frac{n - \lceil t\lfloor\gamma\rfloor\rceil - k}{\lfloor\gamma\rfloor}\right\rfloor + \lceil t\lfloor\gamma\rfloor\rceil - k - 1,\quad
\left\lfloor\frac{\lceil\gamma\rceil + 1}{\lceil\gamma\rceil}\right\rfloor + n - \lceil\gamma\rceil - 2
\right\}
\]
where $\gamma = \frac{n - k}{t + 1}$ as long as $t > 0$. If $t \le 0$, the threshold is
$\left\lfloor\frac{n + (\ell - 1)\lceil\ell t + k - 1\rceil}{\ell}\right\rfloor - 1$.
\end{theorem}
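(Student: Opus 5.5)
The plan is to apply Lemma~\ref{lem:threshold} with the property function $\psi^\omega$ and $\mu=\delta$. The threshold is then the maximum of $\varphi_\delta(x,y)=\lfloor (n-y)/x\rfloor + y - 1$, taken from Table~\ref{tab:density}, over the forbidden region
\[
R=\{(x,y)\in\N^2 \mid \ell\le x,\; 0\le y\le n-x,\; y<tx+k\}.
\]
Since $y$ is an integer, the condition $y<tx+k$ becomes $y\le \lceil tx+k\rceil-1$. I will first re-derive the density formula briefly. By \eqref{eq:phi_omega}, $\varphi_\delta(x,y)$ is the maximum of $y+\|\vec x\|_0$ over all vectors $\vec x$ with $|\vec x|=x$ and $\sum x_i=n-y$. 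This is maximized by the most balanced partition, which gives $\|\vec x\|_0=\lfloor (n-y)/x\rfloor$. The remaining work is a discrete two-variable optimization of $g(x,y)=\lfloor (n-y)/x\rfloor+y$ over $R$, followed by subtracting $1$.

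The first step is to fix $x$ and maximize over $y$. Increasing $y$ by $1$ changes $g$ by $1-(\text{drop in }\lfloor (n-y)/x\rfloor)$, and that drop is at most $1$ whenever $x\ge 1$. Hence $g(x,\cdot)$ is nondecreasing in $y$, and for each $x$ the optimum sits at the top boundary $y=\min\{\lceil tx+k\rceil-1,\, n-x\}$. The problem therefore reduces to a one-variable function along the upper boundary of $R$. This boundary consists of two pieces: the line $y=\lceil tx+k\rceil-1$, and the line $y=n-x$. The two pieces meet near $x=\gamma=(n-k)/(t+1)$, which is where $tx+k=n-x$.

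The second step is to split into cases by the sign of $t$.

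\textbf{Case $t\le 0$.} Along the line piece, $y$ does not increase with $x$, while $(n-y)/x$ decreases. So $h(x)=\lfloor (n-\lceil tx+k\rceil+1)/x\rfloor+\lceil tx+k\rceil-1$ should be maximized at the left endpoint $x=\ell$. Rewriting $\lfloor (n-c)/\ell\rfloor+c$ with $c=\lceil \ell t+k\rceil-1$ as $\lfloor (n+(\ell-1)c)/\ell\rfloor$ gives the first term of the statement. I will also check that the other piece $y=n-x$ gives $g=n-x+1$, which is at most this value, or is empty in the relevant range.

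\textbf{Case $t>0$.} The function along the line piece is no longer monotone, so I will argue that the maximum of $h$ is attained at one of three candidates:
\begin{itemize}
\item the left endpoint $x=\ell$, giving the first term;
\item the last integer point on the line before the corner, $x=\lfloor\gamma\rfloor$, which after substitution gives the second term;
\item the first point past the corner on the piece $y=n-x$, at $x=\lceil\gamma\rceil$, where $g=\lfloor x/x\rfloor$-type expressions give the third term $\lfloor(\lceil\gamma\rceil+1)/\lceil\gamma\rceil\rfloor+n-\lceil\gamma\rceil-2$.
\end{itemize}
To justify this, I will show that $h(x)$ is essentially $n/x+tx+\text{const}$. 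This is convex in $x$, so its maximum on an interval lies at an endpoint. The floor and ceiling perturbations are bounded by $1$, and I must check that they do not create interior maxima. On the piece $y=n-x$, the value $g=n-x+\lfloor x/x\rfloor$-like quantity is decreasing in $x$, so only the corner point matters.

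I expect the main obstacle to be this rigorous endpoint argument for $t>0$ in the presence of the nested floors and ceilings. Convexity of the continuous relaxation only shows that interior integer points lose by at most $O(1)$ against the endpoints, so I will need a direct discrete comparison, $h(x)\le\max\{h(\ell),h(\lfloor\gamma\rfloor)\}$. I would first attempt this by writing $(n-y)/x$ explicitly and comparing consecutive differences $h(x+1)-h(x)$, showing they change sign at most once, from negative to positive. Pinning down exactly which corner point contributes the third term, given the paper's stated form, may also need care with off-by-one conventions.
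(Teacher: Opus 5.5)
Your route is the paper's: Lemma~\ref{lem:threshold} with $\psi^\omega$, $\varphi_\delta(x,y)=\lfloor (n-y)/x\rfloor+y-1$, monotonicity in $y$, reduction to the boundary $y=\min\{\lceil tx+k\rceil-1,\,n-x\}$, and a split on the sign of $t$. For $t\le 0$ this works in the nondegenerate range $0\le\lceil \ell t+k\rceil-1\le n-\ell$: $\varphi_\delta$ is nondecreasing in $y$ and nonincreasing in $x$, and the boundary is nonincreasing.

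The genuine gap is your assertion, for $t>0$, that the candidate points ``give'' the second and third terms. They do not.
On the line, $x=\lfloor\gamma\rfloor$ gives $\bigl\lfloor (n-\lceil t\lfloor\gamma\rfloor+k\rceil+1)/\lfloor\gamma\rfloor\bigr\rfloor+\lceil t\lfloor\gamma\rfloor+k\rceil-2$. The stated second term is, for $k=0$, $\varphi_\delta(\lfloor\gamma\rfloor,\lceil t\lfloor\gamma\rfloor\rceil)$, the value at a point with $y\ge f(x)$, hence outside $R$. For $k\ne0$, the $-k$ outside the floor should also be $+k$.
On $y=n-x$ the value is exactly $n-x$. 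The first point of $R$ on that piece is $x=\lfloor\gamma\rfloor+1$, not $\lceil\gamma\rceil$ when $\gamma\in\mathbb{Z}$, since then $n-\gamma=f(\gamma)$. It gives $n-\lfloor\gamma\rfloor-1$, whereas the stated third term is $\varphi_\delta(\lceil\gamma\rceil,n-\lceil\gamma\rceil-1)=n-\lceil\gamma\rceil-1$ (for $\lceil\gamma\rceil\ge2$).

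This is not an off-by-one convention to be tuned: the statement is false.
\begin{itemize}
\item Take $t=1$, $k=0$, $\ell=2$ ($1$-toughness) and even $n\ge4$. The second term equals $n/2$. But by Dirac's theorem every graph with $\delta\ge n/2$ is Hamiltonian, hence $1$-tough, so the threshold is at most $n/2-1$.
\item Take $t=3$, $k=0$, $\ell=2$, $n=11$. All three terms equal $7$, yet $K_8+\overline{K_3}$ has $\delta=8$ and toughness $8/3<3$.
\end{itemize}
So no completion of your plan can prove the theorem as written. The paper's proof has the same defects: its region is written with $\max$ where $\min$ is meant, its displayed expression for $\varphi(x,\lceil f(x)\rceil-1)$ disagrees with $\varphi_\delta$, and the endpoint claim is only asserted.

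Carried out correctly, your plan leads to a different formula. Since $\lceil t\lfloor\gamma\rfloor+k\rceil\le n-\lfloor\gamma\rfloor$, monotonicity in $y$ bounds the line value at $x=\lfloor\gamma\rfloor$ by $\varphi_\delta(\lfloor\gamma\rfloor,n-\lfloor\gamma\rfloor-1)=n-\lfloor\gamma\rfloor-1$. So that candidate is redundant. In the nondegenerate range (say $2\le\ell\le\lfloor\gamma\rfloor<n$ and $\ell t+k>0$) you would get $\max\bigl\{\lfloor (n+(\ell-1)\lceil \ell t+k-1\rceil)/\ell\rfloor-1,\ n-\lfloor\gamma\rfloor-1\bigr\}$. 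This returns the correct values $n/2-1$ and $8$ in the two examples.

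That still requires the interior comparison you flagged, and the tool you propose for it fails. For $t=1/2$, $k=0$, $n=50$, the line values $\varphi_\delta(x,\lceil x/2\rceil-1)$ at $x=8,\dots,13$ are $7,8,7,8,7,8$, so consecutive differences change sign repeatedly. A comparison against the convex majorant $F(x)=(n-k)/x+tx+k-t-1$ works better. For $x\ge2$ the line value is strictly below $F(x)$, and $F(\gamma)=n-\gamma$. Hence when $F(\ell)\le F(\gamma)$, no interior point exceeds $n-\lfloor\gamma\rfloor-1$. When $F(\ell)>F(\gamma)$, it only shows that interior values exceed the value at $x=\ell$ by at most one. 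You then still need to control the loss $F(\ell)-\varphi_\delta(\ell,\lceil \ell t+k\rceil-1)$; it is at most $1$ for $\ell=2$, which suffices there.
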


\begin{proof}
We use the property function $\psi = \psi^\omega$ defined above. The $\delta$-threshold $T$ is the maximum of $\varphi^\psi_\delta(x, y) = \left\lfloor\frac{n - y}{x}\right\rfloor + y - 1$ (see Table~\ref{tab:density}) over
\[
R = \{(x, y) \mid 0 \le y \le \max(\lceil f(x)\rceil - 1,\; n - x),\; \ell \le x \le \alpha(G)\}.
\]

Notice that for $x \ge 1$,
\[
\varphi(x, y + 1) = \left\lfloor\frac{n + (x-1)(y+1)}{x}\right\rfloor - 1
\ge \left\lfloor\frac{n + (x-1)y}{x}\right\rfloor - 1
= \varphi(x, y).
\]
Therefore $\varphi(x, y)$ is nondecreasing with $y$, so letting $A = [\ell, \alpha(G)]$, $A_1 = \left[\ell, \left\lceil\frac{n-k}{t+1}\right\rceil\right]$, and $A_2 = \left[\left\lceil\frac{n-k}{t+1}\right\rceil, \alpha(G)\right]$, we get
\[
T = \max_{x \in A} \varphi\!\left(x,\; \max\{\lceil f(x)\rceil - 1,\; n - x\}\right)
= \max\!\left\{\max_{x \in A_1} \varphi\!\left(x,\; \lceil f(x)\rceil - 1\right),\; \max_{x \in A_2} \varphi\!\left(x,\; n - x\right)\right\}.
\]

Now,
\[
\max_{x \in A_2} \varphi(x, n - x) = \varphi\!\left(\left\lceil\frac{n-k}{t+1}\right\rceil,\; n - \left\lceil\frac{n-k}{t+1}\right\rceil\right)
\]
because $\varphi$ is non-increasing as $x$ increases, so we are left with finding $\max_{x \in A_1} \varphi(x, \lceil f(x)\rceil - 1)$.

\[
\max_{x \in A_1} \varphi(x, \lceil f(x)\rceil - 1) = \max_{x \in A_1} \left\{\left\lfloor\frac{n - \lceil tx\rceil - k}{x}\right\rfloor + \lceil tx\rceil + k\right\}.
\]
If $t > 0$, one may show that the maximum value of $\varphi$ occurs at $(\ell,\; \ell t + k - 1)$ or $\left(\left\lfloor\frac{n-k}{t+1}\right\rfloor,\; \left\lceil f\!\left(\left\lfloor\frac{n-k}{t+1}\right\rfloor\right)\right\rceil - 1\right)$. If $t \le 0$, the maximum value of $\varphi$ occurs at $(\ell,\; \ell t + k - 1)$.
\end{proof}

\subsection*{Note added in 2026}

Since this paper was originally drafted, several developments in the literature have paralleled or complemented our framework:

\begin{itemize}
\item Bauer, Broersma, and Schmeichel~\cite{bauer06} published a comprehensive survey of toughness results. Bauer, Broersma, van den Heuvel, Kahl, and others~\cite{bauer15} surveyed best monotone \emph{degree-sequence} conditions (Chv\'{a}tal-type) for connectivity, toughness, binding number, $k$-factors, and Hamiltonicity. Our minimum-degree thresholds are special cases of their more refined degree-sequence conditions, though our unified derivation via $\psi_G$ and $\varphi^\psi_\mu$ remains distinct.

\item Kahl~\cite{kahl19,kahl21} proved that vertex compression cannot increase any of the standard vulnerability parameters, showing that threshold and quasi-threshold graphs are extremal. This provides a structural complement to our algebraic/geometric approach: Kahl identifies \emph{which} graphs are extremal; our framework identifies \emph{what the thresholds are}.

\item Gu~\cite{gu21} proved Brouwer's conjecture~\cite{brouwer95} that $\tau(G) \ge d/\lambda - 1$ for $d$-regular graphs, where $\lambda$ is the second-largest absolute eigenvalue. Gu and Haemers~\cite{gu22} extended this to Laplacian eigenvalues, and Chen, Li, and Shiu~\cite{chen24} established spectral bounds for scattering number, integrity, tenacity, and $\ell$-connectivity simultaneously.

\item On the computational side, Drange, Dregi, and van~'t~Hof~\cite{drange16} showed that component-order connectivity is FPT parameterized by $k + \ell$, and Kumar and Lokshtanov~\cite{kumar16} gave a $2\ell k$-vertex kernel. Katona and Khan~\cite{katona24} proved toughness is FPT by treewidth.

\item The component-order connectivity model has been surveyed comprehensively by Gross et~al.~\cite{gross13} and extended to directed graphs by Bang-Jensen et~al.~\cite{bangjensen22}.
\end{itemize}

\section{Further Research}

The framework developed here suggests several natural extensions.

\paragraph{Spectral density parameters.}
The proof of Brouwer's toughness conjecture~\cite{gu21} means that spectral quantities---the eigenratio $d/\lambda$, the algebraic connectivity $\mu_2$, or the normalized Laplacian gap---can serve as density parameters~$\mu$ in our framework. Since these are polynomial-time computable, the resulting thresholds would yield efficiently verifiable sufficient conditions for all the vulnerability properties treated here. The spectral bounds of Chen and Li~\cite{chen24} suggest that such an approach would be fruitful.

\paragraph{Rupture degree and tenacity.}
The rupture degree~\cite{li05} $r(G) = \max\{\omega(G - S) - |S| - m(G - S)\}$ and tenacity~\cite{cozzens95} $T(G) = \min\{(|S| + m(G - S))/\omega(G - S)\}$ incorporate all three fundamental quantities: cost $|S|$, component count $\omega(G-S)$, and largest component order $m(G-S)$. They can be captured by extending the property function to two variables: $\psi_G(x, m) = \min\{|S| : \omega(G - S) = x,\; m(G-S) = m\}$, with the density function and threshold machinery carrying over naturally.

\paragraph{Edge-removal analogues.}
The edge-removal property function $\psi'_G(x) = \min\{|F| : F \subseteq E(G),\; \omega(G - F) = x\}$ is well-defined and supports a parallel framework. Edge toughness equals the strength of the graph, which is polynomial-time computable via matroid optimization. Edge integrity~\cite{bagga94} and component-order edge connectivity have been studied independently; our framework would unify their threshold analysis.

\paragraph{Directed and hypergraph extensions.}
Bang-Jensen et~al.~\cite{bangjensen22} initiated the study of component-order connectivity in directed graphs. Extending the property function to directed graphs---using strongly connected components---is natural but largely unexplored. Hypergraph extensions face the additional modeling choice between weak and strong vertex deletion.

\paragraph{Random graph thresholds.}
For $G(n,p)$, the deterministic $\delta$-thresholds derived here correspond, via the minimum-degree--edge-probability relationship $\delta \approx np$, to phase transitions in the random graph model. Making this connection precise---especially for $t$-toughness and $(k, \ell)$-component-order-connectivity---would connect our deterministic framework to probabilistic phenomena.

\end{document}